\documentclass[a4paper, 11pt, reqno]{amsart}

\usepackage{amssymb}

\usepackage{enumitem}
\setlist[enumerate,1]{label=\rm(\arabic*)}
\setlist[enumerate,2]{label=\rm(\alph*)}
\setlist[enumerate,3]{label=\rm(\roman*)}

\usepackage[PS]{diagrams}
\usepackage{xcolor}
\usepackage{pgf}

\newcommand{\ie}{\textit{i.e.}}

\newcommand{\viz}{\textit{viz.}}
\newcommand{\ignore}[1]{\relax}
\newcommand{\bN}{\mathbb{N}}
\newcommand{\bZ}{\mathbb{Z}}

\newcommand{\nbd}{\nobreakdash}

\newcommand{\rr}{\mathrm{r}}

\numberwithin{equation}{section}

\newtheorem{theorem}[equation]{Theorem}
\newtheorem{corollary}[equation]{Corollary}

\newtheorem{lemma}[equation]{Lemma}

\theoremstyle{definition}

\newtheorem{remark}[equation]{Remark}

\newtheorem{example}[equation]{Example}

\let\ideal=\unlhd


\let\oldtocsubsection=\tocsubsection
\renewcommand{\tocsubsection}[2]{\hspace{3.5em}\(\cdot\)~\oldtocsubsection{#1}{#2}}

\usepackage[hypertexnames=false,colorlinks=false,pdfborderstyle={/S/U/W 1}]{hyperref}
\usepackage{bookmark}

\newcommand{\supp}{\mathrm{supp}}

\newcommand{\inv}{^{-1}}

\title{Annihilators in $\bN^{k}$-graded and $\bZ^{k}$-graded rings}

\date{\today}

\author{Thomas H\"uttemann}

\address{Thomas H\"uttemann\\ Queen's University Belfast\\ School of
  Mathematics and Physics\\ Mathematical Sciences Research Centre\\
  Belfast BT7~1NN\\ Northern Ireland, UK}

\email{t.huettemann@qub.ac.uk}

\urladdr{http://www.qub.ac.uk/puremaths/Staff/Thomas\ Huettemann/}

\thanks{Work on this paper commenced during a research visit of the
  author to the Beijing Institute of Technology in January~2017. Their
  hospitality and financial support are gratefully acknowledged.}

\subjclass[2010]{16A03, 16A99}

\begin{document}

\begin{abstract}
  It has been shown by \textsc{McCoy} that a right ideal of a
  polynomial ring with several indeterminates has a non-trivial
  homogeneous right annihilator of degree~$0$ provided its right
  annihilator is non-trivial to begin with. In this note, it is
  documented that any $\bN^{k}$-graded ring~$R$ has a slightly weaker
  property: the right annihilator of a right ideal contains a
  homogeneous non-zero element, if it is non-trivial to begin with. If
  $R$ is a subring of a $\bZ^{k}$-graded ring~$S$ satisfying a certain
  non-annihilation property (which is the case if $S$ is strongly
  graded, for example), then it is possible to find annihilators of
  degree~$0$.
\end{abstract}

\maketitle

\section{Introduction}

In 1942, \textsc{McCoy} proved the following remarkable result
concerning zero divisors in polynomial rings:

\begin{theorem}[{\textsc{McCoy} \cite[Theorems~2 and~3]{MR0006150}}]
  \label{thm:original_1}
  Let $K$ be a commutative unital ring, and let
  $f \in K[x_{1},\, x_{2},\, \cdots,\, x_{k}]$ be a non-zero
  polynomial which is a zero divisor
  in~$K[x_{1},\, x_{2},\, \cdots,\, x_{k}]$. Then there exists a
  non-zero $c \in K$ such that $f \cdot c = 0$.
\end{theorem}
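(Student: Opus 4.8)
The plan is to treat the single-variable case first and then reduce the general statement to it. For $k = 1$, write $f = \sum_{i=0}^{n} a_{i} x^{i}$ and suppose $fg = 0$ for some non-zero $g$; among all such annihilators I would choose $g = \sum_{j=0}^{m} b_{j} x^{j}$ with $b_{m} \neq 0$ of \emph{minimal} degree $m$. The key claim is that $a_{i} g = 0$ for every $i$, which I would establish by downward induction on $i$. For $i = n$ the coefficient of $x^{n+m}$ in $fg$ is $a_{n} b_{m} = 0$, so $a_{n} g$ has degree strictly less than $m$ while still satisfying $(a_{n} g) f = a_{n}(gf) = 0$; minimality of $m$ forces $a_{n} g = 0$. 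If $a_{n} g = \dots = a_{i+1} g = 0$, then all products $a_{j} b_{l}$ with $j > i$ vanish, so the coefficient of $x^{i+m}$ in $fg$ collapses to $a_{i} b_{m}$; hence $a_{i} b_{m} = 0$, the element $a_{i} g$ again has degree below $m$ and annihilates $f$, and minimality gives $a_{i} g = 0$. Taking $c = b_{m} \neq 0$ then yields $f \cdot c = 0$, since $a_{i} b_{m} = 0$ for all $i$.

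To pass to $k$ indeterminates I would use a Kronecker substitution. Since $f$ and a fixed non-zero annihilator $g$ have finitely many terms, I can choose $N$ larger than every exponent occurring in them and consider the $K$-algebra homomorphism $K[x_{1},\, \dots,\, x_{k}] \to K[t]$ sending $x_{r} \mapsto t^{N^{r-1}}$. The induced map $\alpha \mapsto \alpha_{1} + \alpha_{2} N + \dots + \alpha_{k} N^{k-1}$ on multidegrees is injective on $\supp(f) \cup \supp(g)$ (it is read off as a string of base\nbd$N$ digits), so the images $\tilde{f}, \tilde{g} \in K[t]$ are non-zero, and $\tilde{f}\tilde{g} = 0$ because the map is a ring homomorphism. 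Thus $\tilde{f}$ is a zero divisor in $K[t]$, and the one-variable case supplies a non-zero $c \in K$ with $c\tilde{f} = 0$. By injectivity on $\supp(f)$ the relation $c\tilde{f} = 0$ forces $c a_{\alpha} = 0$ for every $\alpha \in \supp(f)$, whence $f \cdot c = 0$ in $K[x_{1},\, \dots,\, x_{k}]$.

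The routine part is the bookkeeping in the substitution step; the genuine content lies in the one-variable minimal-degree argument. The step that must be handled with care is the downward induction, where at each stage one has to verify that the previously established relations $a_{j} g = 0$ (for $j > i$) really do annihilate every intermediate coefficient, so that the coefficient of $x^{i+m}$ in $fg$ reduces to the single term $a_{i} b_{m}$. The other place demanding attention is ensuring that the substitution keeps both $f$ and $g$ non-zero, so that the zero-divisor hypothesis genuinely survives the reduction to $K[t]$.
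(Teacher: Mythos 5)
The paper does not prove this statement: it is quoted as \textsc{McCoy}'s classical theorem and used only as motivation, so there is no internal proof to compare against. (The paper's own machinery does eventually recover it by a completely different route --- Corollary~\ref{cor:McCoy-again} applied to the commutative Laurent polynomial ring $S=K[X_1^{\pm1},\dots,X_k^{\pm1}]$, whose degree-$0$ part is $K$, yields exactly the statement --- but that passes through the graded amplitude-reduction argument of Theorems~\ref{thm:amplitude} and~\ref{thm:Nk} rather than through coefficients.) Your argument is the standard direct proof and it is correct. In the one-variable case the downward induction is handled properly: you use the minimality of $\deg g$ to upgrade $a_ib_m=0$ to the full relation $a_ig=0$ (not merely the vanishing of the top coefficient), which is precisely what is needed so that the coefficient of $x^{i+m}$ in $fg$ collapses to the single term $a_ib_m$ at the next stage; and you use commutativity of $K$ exactly where it is indispensable, namely in $f\cdot(a_ig)=a_i\cdot(fg)=0$ (the paper's $2\times2$ matrix example shows the element statement fails without it). The Kronecker substitution step is also sound: with $N$ exceeding all exponents of $f$ and $g$, distinct monomials of $f$ (and of $g$) land on distinct powers of $t$, so $\tilde f,\tilde g\neq0$ with the same coefficient sets, $\tilde f\tilde g=0$ by functoriality, and $c\,\tilde f=0$ translates coefficientwise back to $fc=0$. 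Your proof buys elementarity and self-containedness; the paper's graded approach buys generality (arbitrary $\bN^k$-graded and strongly $\bZ^k$-graded rings, and the ideal-theoretic version for non-commutative coefficients).
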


The result is not valid for non-commutative~$K$. Indeed, consider the
ring $R = \mathrm{Mat}_2 (\bZ)$ of square matrices of size~$2$
over~$\bZ$, and write $E_{ij}$ for the usual matrix units. The
polynomials $f = E_{21} + E_{11} x + E_{22} x^2 + E_{12} x^3$ and
$ g = E_{11} - E_{12} x$ satisfy $g \cdot f = 0$, but no $M \in R$
annihilates~$f$. Nevertheless, an ideal-theoretic variation of
Theorem~\ref{thm:original_1} holds:

\begin{theorem} [{\textsc{McCoy} \cite[Theorem]{MR0082486}}]
  \label{thm:original_ideal}
  Let $K$ be a unital ring. If the right ideal $A$ of
  $K[x_{1},\, x_{2},\, \cdots,\, x_{k}]$ has non-trivial right
  annihilator, then there exists a non-zero $c \in K$ with
  $A \cdot c = \{0\}$.
\end{theorem}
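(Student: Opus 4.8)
The plan is to argue directly with a monomial order on $R = K[x_{1},\, x_{2},\, \dots,\, x_{k}]$, bypassing both the commutativity hypothesis of Theorem~\ref{thm:original_1} (which is unavailable here, as $K$ need not be commutative) and any induction on the number~$k$ of indeterminates. First I would fix a monomial order $\preceq$ on the monomials $x^{\alpha}$, $\alpha \in \bN^{k}$ --- for instance the lexicographic order --- recalling that this is a total order, compatible with multiplication, which is moreover a well-order. For a non-zero $f \in R$ write $\mathrm{lm}(f)$ for its $\preceq$-largest monomial and $\mathrm{lc}(f) \in K$ for the corresponding leading coefficient. The starting observation is that the right annihilator $I = \{ g \in R : A \cdot g = \{0\} \}$ inherits a one-sided stability from $A$: for every $e \in K$ and every $g \in I$ one has $e g \in I$, since $a e \in A$ for all $a \in A$ (as $A$ is a right ideal), whence $a(eg) = (ae)g = 0$. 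As $I \neq \{0\}$ by hypothesis, I may choose $g \in I \setminus \{0\}$ whose leading monomial $x^{\delta} = \mathrm{lm}(g)$ is $\preceq$-minimal among the leading monomials of all non-zero elements of $I$, and set $c = \mathrm{lc}(g) \neq 0$.

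The crux is the following propagation lemma: if $e \in K$ satisfies $e c = 0$, then already $e g = 0$. Indeed $eg \in I$ by the stability just noted, and left multiplication by $e$ preserves every monomial while replacing the coefficient of $x^{\delta}$ by $ec = 0$; hence every monomial surviving in $eg$ is strictly $\prec x^{\delta}$. Were $eg$ non-zero, its leading monomial would be $\prec x^{\delta}$, contradicting the minimality of $x^{\delta}$; so $eg = 0$. This is the step that converts the single chosen annihilator $g$ into genuine control over $K$.

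With the lemma in hand I would propagate the annihilation from $c$ to all of $A$ by a leading-term peeling argument. Fix $f \in A$, so that $fg = 0$. Because $\preceq$ is multiplicative, the top monomial of the product $fg$ is $\mathrm{lm}(f)\,x^{\delta}$, occurring with coefficient $\mathrm{lc}(f)\,c$; thus $\mathrm{lc}(f)\,c = 0$, and therefore $\mathrm{lc}(f)\,g = 0$ by the lemma. Replacing $f$ by $f' = f - \mathrm{lc}(f)\,\mathrm{lm}(f)$ preserves the relation $f'g = 0$ (since the monomial $\mathrm{lm}(f)$ is central and commutes past $\mathrm{lc}(f)$, while $\mathrm{lc}(f)\,g = 0$) and strictly lowers the leading monomial. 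As $\preceq$ is a well-order and $\supp(f)$ is finite, iterating terminates and shows $a\,c = 0$ for every coefficient $a$ of $f$, that is $f c = 0$. Since $f \in A$ was arbitrary, $A \cdot c = \{0\}$ with $c \in K \setminus \{0\}$, as required.

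I expect the main obstacle to be exactly what forces the monomial-order formulation. Because $K$ is not assumed commutative, Theorem~\ref{thm:original_1} cannot be invoked, and the naive induction on $k$ stumbles: a single-variable reduction over the coefficient ring $K[x_{1},\, \dots,\, x_{k-1}]$ only yields an element of that ring annihilating $A$, and the right ideal generated by the $K[x_{1},\, \dots,\, x_{k-1}]$-coefficients of $A$ need \emph{not} be annihilated by it, so the descent to $K$ breaks down. Selecting a \emph{globally} $\preceq$-minimal leading monomial, together with the propagation lemma, is precisely what replaces this delicate descent through the variables and makes one fixed coefficient $c$ annihilate the entire ideal in a single stroke.
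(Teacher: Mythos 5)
Your argument is correct, but it takes a genuinely different route from the paper's. The paper treats this statement as a cited classical result and only recovers it at the end of Section~5, as the composite of two pieces of graded machinery: Theorem~\ref{thm:Nk} (induction on~$k$, with the amplitude-reduction argument of Theorem~\ref{thm:amplitude} at its core) produces a non-zero \emph{homogeneous} annihilator $g \in A^{\rr}$ of some degree $v \in \bN^{k}$, and Theorem~\ref{thm:half-strong} then pushes it down to degree~$0$ by viewing $K[x_{1},\dots,x_{k}]$ as the positive subring of the Laurent polynomial ring and multiplying $g$ by a suitable element of degree~$-v$. You instead argue in one stroke: fix a monomial well-order, pick $g \in A^{\rr}\setminus\{0\}$ with $\preceq$-minimal leading monomial, and use the propagation lemma ($ec=0 \Rightarrow eg=0$, via left $K$-stability of the two-sided ideal $A^{\rr}$ and minimality) to peel off leading terms of an arbitrary $f \in A$. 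This is essentially a monomial-order generalisation of \textsc{McCoy}'s original one-variable minimal-degree argument; it is shorter and entirely self-contained, whereas the paper's detour buys the more general Theorems~\ref{thm:Nk} and~\ref{thm:half-strong}, which apply to arbitrary $\bN^{k}$-graded rings and to positive subrings of $\bZ^{k}$-graded rings satisfying condition~\eqref{eq:2'} --- settings where no monomial basis, and hence no leading-coefficient calculus, is available. One small caveat: your closing remark that ``the naive induction on $k$ stumbles'' somewhat overstates the case, since the paper's Theorem~\ref{thm:Nk} does run an induction on~$k$ successfully, albeit by a more delicate interplay of the two partial gradings rather than by the coefficient-ring reduction you describe; but this does not affect the validity of your proof.
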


This can be re-phrased in terms of a grading on the polynomial ring:
{\it If the right ideal~$A$ has non-trivial right annihilator, then
  there exists a non-zero annihilator which is homogeneous of
  degree~$0$. In particular, the right annihilator of~$A$ contains a
  non-zero graded ideal of $K[x_{1},\, x_{2},\, \cdots,\, x_{k}]$
  which intersects $K$ non-trivially.}

Let $G$ be an additive monoid with zero element~$0$. We say that a
$G$-graded ring~$T$ has the {\it graded right \textsc{McCoy} property
  for elements\/} if for all non-zero $f,g \in T$ with $f \cdot g = 0$
there exists a homogeneous element $h \in T$ of degree~$0$ such that
$f \cdot h = 0$. We say that $T$~has the {\it graded right
  \textsc{McCoy} property for right ideals\/} if for all right ideals
$A \ideal T$ with non-trivial right annihilator~$A^{\rr}$ there exists
a non-zero homogeneous element of degree~$0$ in~$A^{\rr}$.

Thus a polynomial ring $T = K[X_{1},\, X_{2},\, \cdots,\, X_{k}]$, for
$K$ a unital ring, has the graded right \textsc{McCoy} property for
right ideals by Theorem~\ref{thm:original_ideal}, and if $K$ is unital
and commutative, it has the graded right \textsc{McCoy} property for
elements by Theorem~\ref{thm:original_1}.

It is the purpose of this note to re-visit these results strictly from
the point of view of graded algebra, and provide a different
perspective on the special place polynomial rings occupy in the
theory. We show that every $\bN^{k}$-graded ring~$T$ has the {\it weak
  graded right \textsc{McCoy} property for right ideals\/}
(Theorem~\ref{thm:Nk}): if the right ideal $A \ideal T$ has
non-trivial right annihilator~$A^{\rr}$ there exists a non-zero
homogeneous element, of possibly non-zero degree, in~$A^{\rr}$. If $T$
arises as the $\bN^{k}$-graded subring of a {\it strongly\/}
$\bZ^{k}$-graded ring, or more generally of a $\bZ^{k}$-graded ring
satisfying a certain non-annihilation condition, then $T$ actually
possesses the graded right \textsc{McCoy} property for right ideals
(Theorem~\ref{thm:half-strong}); this applies, for example, to
polynomial rings.

For semi-commutative~$T$ the (weak) graded \textsc{McCoy} property for
right ideals implies the (weak) graded \textsc{McCoy} property for
elements, which is recorded in
Corollaries~\ref{cor:weak_McCoy_elements} and~\ref{cor:McCoy-again}.

\section{Notation and conventions}

Given a right ideal~$A$ of a (possibly non-unital) ring~$R$ we write
$A^{\rr}$ for the right annihilator of~$A$ in~$R$, that is,
\begin{displaymath}
A^{\rr} = \{ r \in R \,|\, \forall a \in A \colon ar = 0 \} \ .
\end{displaymath}
The set~$A^{\rr}$ is a two-sided ideal of~$R$.

\subsection*{Rings graded by a monoid}

Given a monoid~$G$, additively written, a $G$-graded ring is a
ring~$R$ together with a decomposition $R = \bigoplus_{g \in G} R_{g}$
into \textsc{abel}ian groups such that $R_{g} R_{h} \subseteq R_{g+h}$
for all $g,h \in G$. Elements of~$R_{g}$ are called {\it homogeneous
  of degree~$g$}; we may say {\it $R$-homogeneous of degree~$g$\/} if
we want to emphasise the ring~$R$ and its grading.

Every element $r$ of a $G$-graded ring~$R$ can be uniquely written as
a sum
\begin{equation}
  \label{eq:0}
  r = \sum_{g \in G} r_{g}
\end{equation}
where $r_{g} \in R_{g}$, with almost all $r_{g}$ zero. The set
$\supp(r) = \{g \in G \,|\, r_{g} \neq 0\}$ is the {\it
  support of~$r$}. --- The following elementary Lemma is central:

\begin{lemma}
  \label{lem:cancellative'}
  Suppose $G$ is an additively written monoid with neutral element~$0$
  (so that $a+0 = a = 0+a$ for all $a \in G$). Let $T$ be a $G$-graded
  ring.
  Let $r = \sum_{g \in G} r_{g} \in T$ as in~\eqref{eq:0}, and let
  $s \in T$ be homogeneous. 
    \begin{enumerate}
    \item If $G$ is right cancellative so that $a+c = b+c$ implies
      $a=b$, then $rs = 0$ implies $r_{g} s = 0$ for all $g \in
      G$. \label{item:2a}
    \item If $G$ is left cancellative so that $c+a = c+b$ implies
      $a=b$, then $sr = 0$ implies $s r_{g} = 0$ for all $g \in
      G$. \label{item:2b}
    \end{enumerate}
\end{lemma}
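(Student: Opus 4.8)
The plan is to reduce both parts to a single structural observation: right multiplication by a fixed homogeneous element shifts degrees by a fixed amount, and cancellativity guarantees that distinct source-degrees are sent to distinct target-degrees, so no cancellation can occur among the products $r_{g}s$. I would prove part~\ref{item:2a} in detail; part~\ref{item:2b} is entirely symmetric, exchanging the roles of left and right throughout.

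First I would fix the homogeneity of $s$: write $s \in T_{h}$, so $s$ is homogeneous of some degree $h \in G$. Since $T_{g}T_{h} \subseteq T_{g+h}$, each product $r_{g}s$ lies in $T_{g+h}$, that is, it is homogeneous of degree $g+h$. Hence $rs = \sum_{g \in G} r_{g}s$ is already exhibited as a sum of homogeneous elements, and its degree-$d$ component is obtained by collecting exactly those summands whose degree equals $d$:
\[
  (rs)_{d} = \sum_{\substack{g \in G \\ g+h = d}} r_{g}s .
\]

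The key step is to invoke right cancellativity. If $g_{1}+h = g_{2}+h$ then $g_{1} = g_{2}$, so the translation map $g \mapsto g+h$ is injective; consequently, for each fixed $d \in G$ there is at most one index $g$ with $g+h=d$, and the sum above contains at most one term. Since $rs = 0$, every homogeneous component $(rs)_{d}$ vanishes, and therefore the single product $r_{g}s$ occupying degree $g+h$ is zero; this gives $r_{g}s = 0$ for every $g \in \supp(r)$, while for $g \notin \supp(r)$ the conclusion is trivial as $r_{g} = 0$. The only real obstacle is recognising that cancellativity is precisely what prevents two distinct products $r_{g_{1}}s$ and $r_{g_{2}}s$ from landing in the same degree and cancelling each other; once the injectivity of $g \mapsto g+h$ is in hand, comparing homogeneous components finishes the argument, and no further structure of $G$ or $T$ is needed.
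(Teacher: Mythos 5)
Your argument is correct and is essentially identical to the paper's: both observe that $r_{g}s$ is homogeneous of degree $g+h$, that right cancellativity makes $g \mapsto g+h$ injective so the products $r_{g}s$ land in pairwise distinct degrees, and then conclude by uniqueness of homogeneous decomposition; part~(2) is symmetric. No gaps.
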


\begin{proof}
  We prove~(1) only. The elements $r_{g} s$ is homogeneous of
  degree~$g+h$. Now $g+h = g'+h$ if and only if $g=g'$ as $G$ is right
  cancellative, so $rs = \sum_{g \in G} r_{g} s$ is the unique
  decomposition of~$rs$ into homogeneous elements of distinct
  degree. Thus $rs = 0$ entails $r_{g} s = 0$ for all $g \in G$ as
  claimed.
\end{proof}

One can also show that {\it if $G$ is right cancellative or left
  cancellative, and if $T$ has a unity, then the unit element
  $1 \in T$ is homogeneous of degree~$0$}. Indeed, supposing that $G$
is right cancellative write $r = 1 = \sum_{g \in G} r_{g}$ as
in~\eqref{eq:0}. For any $h \in G$ and any homogeneous element
$s \in T_{h}$ we find $s = 1 \cdot s = \sum_{g \in G} r_{g} s$, with
$r_{g} s \in T_{g+h}$.  By uniqueness of the representation, this
implies $r_{g} s = 0$ whenever $h \neq g+h$, \ie, whenever $g \neq 0$
(recall that $G$ is right cancellative). By distributivity, we have
$r_{g} z = 0$ for any $z \in T$ and $g \neq 0$. Applying this
to~$z=1$ yields
$1 = z = 1 \cdot z = \sum_{g \in G} r_{g} z = r_{0} z = r_{0} \cdot 1
= r_{0}$
which shows that the unit is homogeneous of degree~$0$ as claimed. ---
For $G$ a group this is an observation of \textsc{Dade}
\cite[Proposition~1.4]{GRD}.

\subsection*{Rings graded by~$\bN$}

Suppose that $T = \bigoplus_{j \geq 0} T_{j}$ is an $\bN$-graded
ring. Every non-zero element $z \in T$ can be written uniquely in
the form
\begin{equation}
  \label{eq:1}
  z = \sum_{j=\ell}^{u} z_j
\end{equation}
with $z_{j} \in T_{j}$, and both $z_{\ell}$ and~$z_{u}$ non-zero.  We
call the expression~\eqref{eq:1} the {\it canonical form of~$z$}. We
say that $z$ has {\it lower degree\/}~$\ell$ and {\it upper
  degree\/}~$u$; the quantity $a = u - \ell \geq 0$ is called the {\it
  amplitude\/} of~$z$. The element $z$ is homogeneous if and only if
it is of amplitude~$0$. We remark that if $z$ has amplitude~$a$ and
$x$ is a homogeneous element, then $zx$ has amplitude not
exceeding~$a$.  Indeed, if $\ell$ and~$u$ denote the lower and upper
degree of~$z$, respectively, and $x$ has degree~$d$, then the lower
degree of $zx$ is at least $\ell +d$ while the upper degree is at
most~$u+d$. Inequality occurs if and only if $z_{\ell} x = 0$ or
$z_{u} x = 0$.

\section{Annihilators in $\bN$-graded rings}

\begin{theorem}
  \label{thm:amplitude}
  Suppose that $T$ is an $\bN$-graded ring. Let $A$ be a right ideal
  in~$T$ such that $A^{\rr}$, its right annihilator, is non-zero. If
  $A^{\rr}$ contains a non-zero element~$y$ of positive amplitude~$a$,
  then $A^{\rr}$ also contains a non-zero element~$z$ of amplitude
  less than~$a$. In particular, $A^{\rr}$ contains a non-zero
  homogeneous element.
\end{theorem}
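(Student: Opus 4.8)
The plan is to take a non-zero element $y \in A^{\rr}$ of positive amplitude $a$, write it in canonical form, and produce from it another non-zero element of $A^{\rr}$ with strictly smaller amplitude; iterating this (by an evident induction on the amplitude) then yields a non-zero homogeneous element, which is the final "in particular" claim. So the core task is the amplitude-reduction step.

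First I would write $y = \sum_{j=\ell}^{u} y_j$ in canonical form, with $y_\ell \neq 0$ and $y_u \neq 0$ and $u - \ell = a > 0$. The key observation from the excerpt is that for any homogeneous $x$, the product $yx$ has amplitude not exceeding $a$, with \emph{strict} decrease precisely when $y_\ell x = 0$ or $y_u x = 0$. The idea is to multiply $y$ on the \emph{right} by a carefully chosen homogeneous element of $T$ so as to kill one of the extreme components; the delicate point is that this product must land back in $A^{\rr}$, and $A^{\rr}$ is only a right ideal's right annihilator, so I cannot freely right-multiply. However, $A^{\rr}$ is a two-sided ideal of $T$ (as noted in the Notation section), so right-multiplying $y$ by any element of $T$ keeps the result in $A^{\rr}$; the constraint is instead to keep the product \emph{non-zero} while lowering the amplitude.

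The natural candidate is to exploit the top (or bottom) component of $y$ itself. Consider the highest homogeneous piece $y_u \in T_u$. The plan is to look at the element $y \cdot t$ for a homogeneous $t$ chosen relative to the structure of $y_u$; a cleaner route, which I expect to be the intended one, is to consider products involving $y$ and another annihilator or the components of $y$ directly. Concretely, I would examine whether $y_u \cdot y \neq 0$: since $y_u$ is homogeneous, Lemma~\ref{lem:cancellative'}\eqref{item:2a} applied with $\bN$ right cancellative tells us that if some product with $y$ vanishes then it vanishes componentwise. The main obstacle is precisely engineering the cancellation: I must find a homogeneous $x$ (built from the data of $y$) such that $yx \in A^{\rr}$ is non-zero but has at least one extreme component annihilated, forcing the amplitude strictly below $a$.

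The clean mechanism I expect to use is the following. Set $z = y_u$, the leading homogeneous component. Because $A^{\rr}$ is a two-sided ideal and $y \in A^{\rr}$, every homogeneous component of $y$ need not itself lie in $A^{\rr}$ — indeed that is exactly what fails in general — so instead I would form products $y \cdot s$ where $s$ ranges over homogeneous elements and track the leading and trailing terms. If for \emph{every} homogeneous $s$ the product $y s$ has full amplitude $a$ (no extreme cancellation), then in particular $y_\ell s \neq 0$ and $y_u s \neq 0$ for all homogeneous $s$ with $ys \neq 0$, which severely constrains $y_\ell$ and $y_u$; the anticipated contradiction comes from choosing $s$ to be a homogeneous component of $y$ itself (say $s = y_\ell$ or $s = y_u$) and comparing degrees, since the relevant products then sit in degrees that are forced to overlap or vanish. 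The hard part will be arguing that some such choice genuinely produces a non-zero element of strictly smaller amplitude rather than collapsing everything to zero; this is where the interplay between the two-sided ideal property of $A^{\rr}$ and the cancellativity of $\bN$ (via Lemma~\ref{lem:cancellative'}) must be combined carefully, and I would expect the argument to hinge on selecting the multiplier so that exactly one extreme term survives.
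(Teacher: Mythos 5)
Your overall strategy (produce from $y$ a non-zero element of $A^{\rr}$ of strictly smaller amplitude, then iterate down to amplitude~$0$) matches the paper's, and you correctly note that $A^{\rr}$ is a two-sided ideal so that multiplying $y$ by ring elements keeps you inside $A^{\rr}$. But the actual reduction step is never carried out, and the mechanism you sketch for it would not work: you propose to \emph{right}-multiply $y$ by a homogeneous element built from $y$ itself (say $y_\ell$ or $y_u$) and hope that exactly one extreme term is killed. Nothing forces this, and --- more importantly --- this plan never brings the ideal $A$ into play beyond the two-sided-ideal property, whereas the hypothesis that must be exploited in the reduction is precisely that $xy=0$ for every $x\in A$. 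As you yourself concede, ``the hard part'' of making some such choice work is left entirely open, and that hard part is the whole content of the theorem.

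The paper's reduction hinges on a dichotomy you do not consider. Either $y_u \in A^{\rr}$, in which case $z=y_u$ is already a non-zero homogeneous element of $A^{\rr}$ and you are done; or $y_u \notin A^{\rr}$, in which case there is a witness $x \in A$, with canonical form $x=\sum_{i=m}^{n}x_i$, such that $xy_u \neq 0$. By Lemma~\ref{lem:cancellative'} not all of the products $x_i y$ can vanish (otherwise every $x_i y_u=0$ and hence $xy_u=0$), so there is a \emph{maximal} index $p$ with $x_p y \neq 0$. Since $y\in A^{\rr}$ one has $0 = xy = \sum_{i=m}^{p} x_i y$, and the component of this sum in the top degree $p+u$ is exactly $x_p y_u$ (no other term $x_i y_j$ with $i\leq p$, $j\leq u$ reaches that degree), whence $x_p y_u = 0$. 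Thus $z = x_p y$ is non-zero, has upper degree strictly less than $p+u$ and lower degree at least $p+\ell$, hence amplitude less than $a$, and lies in $A^{\rr}$ because $A^{\rr}$ is a two-sided ideal. The essential ingredients missing from your proposal are therefore the case split on whether $y_u$ already annihilates $A$, the choice of a witness $x\in A$ with $xy_u\neq 0$, and \emph{left} multiplication of $y$ by the maximal non-annihilating homogeneous component $x_p$ of that witness.
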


A version of this Theorem for commutative polynomial rings, referring
to degree rather than amplitude, was given by \textsc{Forsythe}
\cite[Theorem~A]{MR0007393}.

\begin{proof}
  Write $y$ in canonical form $y = \sum_{\ell}^{u} y_j$; note that
  $a = u - \ell > 0$ by hypothesis. If $y_{u} \in A^{\rr}$ then
  $z = y_{u}$ has amplitude~$0$, and is the desired element of
  amplitude less than~$a$.

  Otherwise, if $y_{u} \notin A^{\rr}$, we can choose an element
  $x \in A$, with canonical form $\sum_{i=m}^{n} x_{i}$ , such that
  $xy_{u} \neq 0$. If $x_{i} y = 0$ for all~$i$ then we must in
  particular have $x_{i}y_{u} = 0$ for all~$i$ by
  Lemma~\ref{lem:cancellative'}~\ref{item:2b}. Thus
  $x y_{u} = \sum_{m}^{n} x_{i} y_{u} = 0$ contradicting the choice
  of~$x$. Consequently, there exists a maximal index~$p$ with
  $x_{p} y \neq 0$. But $y \in A^{\rr}$ so that
  \begin{displaymath}
    0 = xy = \sum_{i=m}^{n} x_{i} y = \sum_{i=m}^{p} x_{i} y \ .
  \end{displaymath}
  It follows that $x_{p} y_{u} = 0$ so that $z = x_{p} y$ has
  amplitude less than~$a$; indeed, the upper degree of~$x_{p} y$ is
  less than~$p+u$, while the lower degree of $x_{p} y$ is at least
  $p + \ell$. --- As $A^{\rr}$ is a two-sided ideal,
  $z = x_{p} y \in A^{\rr}$ is the desired non-zero element of amplitude
  less than~$a$.

  The last sentence of the Theorem follows by repeated application of
  what we proved already, yielding non-zero elements in~$A^{\rr}$ of
  successively smaller amplitude. The process must stop with an
  element of amplitude~$0$.
\end{proof}

\begin{corollary}
  Suppose that $T$ is an $\bN$-graded ring. Let $A$ be a right ideal
  in~$T$ such that $A^{\rr}$, its right annihilator, is non-zero. Then
  $A^{\rr}$ contains a non-trivial graded ideal of~$T$. \qed
\end{corollary}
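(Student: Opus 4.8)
The plan is to build directly on Theorem~\ref{thm:amplitude}, which already supplies the crucial ingredient: since $A^{\rr} \neq 0$, it contains a non-zero homogeneous element~$h$, say of degree~$d$. Because $A^{\rr}$ is a two-sided ideal of~$T$, the two-sided ideal $I$ generated by~$h$ is automatically contained in~$A^{\rr}$, and it is non-trivial since $0 \neq h \in I$. The entire task therefore reduces to showing that $I$ is a \emph{graded} ideal.

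First I would write down an explicit spanning set for~$I$. As a two-sided ideal generated by a single element in a possibly non-unital ring, $I$ is the additive group generated by $h$ together with all products $th$, $ht'$ and $tht'$ for $t, t' \in T$ (in the unital case only the last type is needed). Expanding each factor into its homogeneous components and using distributivity, one sees that $I$ is spanned, as an \textsc{abel}ian group, by integer multiples of~$h$ and by products of the form $s h s'$ with $s, s'$ homogeneous. The key step is then the remark that every such spanning element is homogeneous: if $s \in T_{g}$ and $s' \in T_{g'}$, then $s h s' \in T_{g+d+g'}$ by the defining inclusion $T_{a} T_{b} \subseteq T_{a+b}$ of a graded ring.

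It follows that each element $x \in I$ is a finite sum of homogeneous elements all lying in~$I$; collecting terms by degree shows that every homogeneous component of~$x$ is itself such a sum and hence lies in~$I$. This is precisely the condition $I = \bigoplus_{g} (I \cap T_{g})$ defining a graded ideal, so $I$ is the desired non-trivial graded ideal contained in~$A^{\rr}$. The only genuine obstacle is this final bookkeeping --- verifying that an ideal generated by homogeneous elements is graded --- and it is the single place where the grading is actually used; everything else is formal, given Theorem~\ref{thm:amplitude} and the two-sidedness of~$A^{\rr}$.
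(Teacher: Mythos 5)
Your proof is correct and is exactly the argument the paper leaves implicit behind its bare \qed: take the non-zero homogeneous $h \in A^{\rr}$ supplied by Theorem~\ref{thm:amplitude}, observe that the two-sided ideal it generates sits inside the two-sided ideal $A^{\rr}$, and check that an ideal generated by a homogeneous element is graded. (One cosmetic slip: your spanning set should also list the one-sided products $sh$ and $hs'$ with $s,s'$ homogeneous, but these are homogeneous too, so nothing changes.)
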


The result is best possible. For let $T = K[x]/\langle x^2\rangle$,
with $K$ a field, graded by $\deg(x) = 1$. Let $A = \langle x \rangle$
be the right ideal of all polynomials in~$x$ without constant term. It
is annihilated by $x \in T_{1}$, but there is no non-zero homogeneous
annihilator of degree~$0$.

\begin{remark}
  It is no coincidence that the proof of Theorem~\ref{thm:amplitude}
  makes use of the natural ordering on~$\bN$: One cannot expect
  \textsc{McCoy}-type results unless the grading monoid lies in an
  ordered group. For example, let $K$ be a field and consider
  $R = K[x]/(x^2-1)$ as a $\bZ/2$-graded ring, with $x$ having
  degree~$1$. This ring contains zero divisors as $(1-x)(1+x) = 0$,
  but $R$ does not contain any non-zero {\it homogeneous\/} zero
  divisors. On the other hand, if $T$ is a $G$-graded ring with $G$ an
  ordered group, then any equality $a \cdot b = 0$ with non-zero $a$
  and~$b$ yields, by passing to leading terms with respect to the
  total order on~$G$, to a pair $(x, y)$ of {\it homogeneous\/}
  non-zero elements with $x \cdot y = 0$.
\end{remark}

\section{Annihilators in $\bN^{k}$-graded rings}

\begin{theorem}
  \label{thm:Nk}
  Suppose that $R$ is an $\mathbb{N}^k$-graded ring. Let $A$ be a
  right ideal in~$R$ such that $A^{\rr}$, its right annihilator, is
  non-trivial. Then $A^{\rr}$ contains a non-zero {\it homogeneous\/}
  element.
\end{theorem}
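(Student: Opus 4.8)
The plan is to re-run the amplitude argument of Theorem~\ref{thm:amplitude} almost verbatim, with two changes forced by the passage from $\bN$ to~$\bN^{k}$. First, I would fix a translation-invariant total order~$\preceq$ on~$\bN^{k}$ --- the lexicographic order will do --- so that every finite subset has a well-defined $\preceq$-largest element and $\mathbf{a} \prec \mathbf{b}$ implies $\mathbf{a}+\mathbf{c} \prec \mathbf{b}+\mathbf{c}$; this lets me speak of the leading (top-degree) homogeneous component of an element. Second, and crucially, I would replace the integer ``amplitude'' of Theorem~\ref{thm:amplitude} by the cardinality $|\supp(y)|$ of the support as the quantity to be decreased, and induct on it. Throughout I use that $\bN^{k}$ is cancellative (so Lemma~\ref{lem:cancellative'} applies) and that $A^{\rr}$ is a two-sided ideal.

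Concretely, pick a non-zero $y \in A^{\rr}$ and induct on $|\supp(y)|$. If $|\supp(y)|=1$ then $y$ is homogeneous and we are done. Otherwise set $\mathbf{u} = \max_{\preceq}\supp(y)$ and write $y = \sum_{\mathbf{n}} y_{\mathbf{n}}$. If the leading component $y_{\mathbf{u}}$ lies in~$A^{\rr}$ we are again done, so suppose not and choose $x \in A$ with $x y_{\mathbf{u}} \neq 0$. Since $y \in A^{\rr}$ we have $xy=0$; writing $x = \sum_{\mathbf{i}} x_{\mathbf{i}}$, Lemma~\ref{lem:cancellative'}\ref{item:2b} shows that $x_{\mathbf{i}} y = 0$ for every~$\mathbf{i}$ would force $x y_{\mathbf{u}} = \sum_{\mathbf{i}} x_{\mathbf{i}} y_{\mathbf{u}} = 0$, a contradiction. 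Hence the set of indices $\mathbf{i} \in \supp(x)$ with $x_{\mathbf{i}} y \neq 0$ is non-empty, and I let $\mathbf{p}$ be its $\preceq$-largest element. Then $z := x_{\mathbf{p}} y$ is non-zero and, $A^{\rr}$ being two-sided, lies in~$A^{\rr}$.

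It remains to see that $|\supp(z)| < |\supp(y)|$, for which the key step is to show that the leading term is killed, i.e.\ $x_{\mathbf{p}} y_{\mathbf{u}} = 0$. I would read off the degree-$(\mathbf{p}+\mathbf{u})$ homogeneous component of $0 = xy = \sum_{\mathbf{i},\mathbf{n}} x_{\mathbf{i}} y_{\mathbf{n}}$: a summand $x_{\mathbf{i}} y_{\mathbf{n}}$ with $x_{\mathbf{i}} y \neq 0$ satisfies $\mathbf{i} \preceq \mathbf{p}$ and $\mathbf{n} \preceq \mathbf{u}$, so translation-invariance gives $\mathbf{i}+\mathbf{n} \preceq \mathbf{p}+\mathbf{n} \preceq \mathbf{p}+\mathbf{u}$, whence $\mathbf{i}+\mathbf{n} = \mathbf{p}+\mathbf{u}$ forces $(\mathbf{i},\mathbf{n}) = (\mathbf{p},\mathbf{u})$ by cancellativity; summands with $x_{\mathbf{i}} y = 0$ contribute nothing by Lemma~\ref{lem:cancellative'}\ref{item:2b}. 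Thus that component equals $x_{\mathbf{p}} y_{\mathbf{u}}$, which must vanish. Since $\supp(z) = \{\,\mathbf{p}+\mathbf{n} : x_{\mathbf{p}} y_{\mathbf{n}} \neq 0\,\}$ and $\mathbf{u}$ is now excluded, translation by~$\mathbf{p}$ being injective shows $\supp(z)$ has strictly fewer elements than $\supp(y)$, and the induction closes, yielding a non-zero homogeneous element of~$A^{\rr}$.

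The main obstacle is precisely this choice of termination measure. The literal analogue of amplitude, the lex-difference $\mathbf{u}-\mathbf{v} \in \bZ^{k}$ of top degree~$\mathbf{u}$ and bottom degree~$\mathbf{v}$, does \emph{not} work: under the lexicographic order $\bZ^{k}$ is not well-founded on its positive cone (for $k \geq 2$ one has the infinite strictly descending chain $(1,0) \succ (1,-1) \succ (1,-2) \succ \cdots$), so successive reductions need not terminate. Tracking $|\supp(y)|$ instead makes the induction run over a non-negative integer, and it is this observation --- together with the fact that multiplying by the single homogeneous coefficient~$x_{\mathbf{p}}$ keeps us inside the two-sided ideal~$A^{\rr}$ --- that lets the one-variable argument go through uniformly in~$k$, bypassing any separate induction on the number of gradings.
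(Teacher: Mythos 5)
Your proof is correct, but it takes a genuinely different route from the paper's. The paper proves Theorem~\ref{thm:Nk} by induction on~$k$: it first regrades $R$ over~$\bN$ via the last coordinate and applies Theorem~\ref{thm:amplitude} to get a $T$\nobreakdash-homogeneous annihilator~$y$, then passes to the graded right ideal~$J$ generated by all homogeneous components of elements of~$A$ (which $y$ still annihilates, by Lemma~\ref{lem:cancellative'}), applies the induction hypothesis to~$J$ over the remaining $\bN^{k}$-grading, and finally uses a minimal-length argument to see that the resulting annihilator is genuinely $\bN^{k+1}$-homogeneous. You instead run the one-variable amplitude argument directly over~$\bN^{k}$, totally ordered lexicographically, replacing the termination measure ``amplitude'' by the support cardinality $|\supp(y)|$. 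Your verification that the leading component $x_{\mathbf{p}}y_{\mathbf{u}}$ dies --- reading off the degree-$(\mathbf{p}+\mathbf{u})$ component and using translation-invariance plus cancellativity to isolate the single surviving summand --- is sound, and your diagnosis of why the literal amplitude does not work for $k\geq 2$ (the positive lex cone is not well-founded) is exactly the right obstruction to name. What your approach buys is uniformity in~$k$ and, implicitly, extra generality: the argument only uses that the grading monoid is cancellative and carries a translation-invariant total order, so it applies beyond~$\bN^{k}$, in the spirit of Remark~3.3 of the paper. What it gives up is the quantitative refinement of Theorem~\ref{thm:amplitude} (each step strictly decreases amplitude), which the paper keeps as a separate one-variable statement and then leverages through the regrading trick. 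One small presentational point: when you conclude $|\supp(z)|<|\supp(y)|$ you should note explicitly that $\supp(z)$ is contained in $\mathbf{p}+\supp(y)$ with $\mathbf{p}+\mathbf{u}$ removed; you do say this, and injectivity of translation makes the count correct, so the induction closes as claimed.
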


\begin{proof}
  We use induction on~$k$, the case $k=1$ being the final sentence of
  Theorem~\ref{thm:amplitude} applied to $T = R$.

  So suppose that $R$ is $\bN^{k+1}$-graded. We let $T$ denote the
  $\bN$-graded ring which is identical to~$R$ as a ring, but with
  grading defined by the last coordinate of~$\bN^{k+1}$. More
  explicitly, denote by $\tau \colon \bN^{k+1} \rTo \bN$ the
  projection
  \begin{displaymath}
    \bN^{k+1} = \bN^{k} \oplus \bN \rTo \bN \ ;
  \end{displaymath}
  writing $R = \bigoplus_{v \in \bN^{k+1}} R_v$ we let $T =
  \bigoplus_{j=0}^{\infty} T_{j}$ where
  \begin{displaymath}
    T_{j} = \bigoplus_{v \in \tau\inv(j)} R_{v} \ .
  \end{displaymath}
  Now suppose $A$ is a right ideal of the ring $R=T$ which has
  non-trivial right annihilator~$A^{\rr}$. By Theorem~\ref{thm:amplitude}
  we find a non-zero element $y \in A^{\rr}$ which is homogeneous of
  degree~$d$ as an element of~$T$, \ie, $y \in A^{\rr} \cap T_{d}$.

  Write a general element $x \in A \subseteq T$ in canonical form with
  respect to the ring~$T$ (\ie, with respect to the $\bN$-grading),
  \begin{displaymath}
    x = \sum_{i=m}^{n} x_{i} \ ,
  \end{displaymath}
  and let $J$ denote the right ideal of $T=R$ generated by all the
  resulting elements~$x_{i}$ (letting $x$ vary over all of~$A$); that
  is, $J$ is the smallest {\it graded\/} right ideal of~$T$
  containing~$A$.  As $x_{i} \in T_{i}$ for all~$i$, and as
  $y \in T_{d}$ is homogeneous, the equality $xy = 0$ (true since
  $y \in A^{\rr}$) implies $x_{i} y = 0$ for all~$i$, by
  Lemma~\ref{lem:cancellative'}~\ref{item:2a}. Thus in fact
  $y \neq 0$ is a right annihilator of~$J$, that is, $y \in J^{\rr}$

  Next, we let $S$ denote the $\bN^{k}$-graded ring which is identical
  to~$R$ (and~$T$) as a ring, but with grading given by the first $k$
  coordinates of~$\bN^{k+1} = \bN^{k} \oplus \bN$. More explicitly,
  denote by $\sigma \colon \bN^{k+1} \rTo \bN^{k}$ the projection
  \begin{displaymath}
    \bN^{k+1} = \bN^{k} \oplus \bN \rTo \bN^{k} \ ;
  \end{displaymath}
  writing $R = \bigoplus_{v \in \bN^{k+1}} R_v$ as before we let
  $S = \bigoplus_{s \in \bN^{k}} S_{s}$ where
  \begin{displaymath}
    S_{s} = \bigoplus_{v \in \sigma\inv(s)} R_{v} \ .
  \end{displaymath}
  Recall now that $J \ideal S$ has non-trivial right annihilator as it
  contains the element $y \neq 0$ constructed above. By our induction
  hypothesis, applied to the $\bN^{k}$-graded ring~$S$ and the
  ideal~$J$, we can find a homogeneous non-zero right annihilator~$z$
  of~$J$ in~$S$, of degree $s \in \bN^{k}$ say. Such an element can
  uniquely be written as a sum of non-zero elements
  \begin{displaymath}
    z = z_{v_{1}} + z_{v_{2}} + \ldots + z_{v_{\ell}} \ ,
  \end{displaymath}
  with $z_{v_j} \in R_{v_j}$ and $\sigma(v_j) = s$ for all~$j$ such
  that
  \begin{displaymath}
    \tau(v_{1}) < \tau(v_{2}) < \ldots < \tau(v_{\ell}) \ .
  \end{displaymath}
  We specifically choose $z$ with $\ell$ as small as possible.  If
  $\ell = 1$ we are done: the element $z = z_{v_1}$ is a homogeneous
  element of~$R$ which annihilates~$J$, and thus annihilates~$A$; note
  that $J^{\rr} \subseteq A^{\rr}$. On the other hand, $\ell > 1$ cannot
  happen. Indeed, if $\ell > 1$ then $z_{v_{\ell}} \notin J^{\rr}$ by
  minimality of~$\ell$. This means we can find an element $x \in J$
  with $x z_{v_{\ell}} \neq 0$. In fact, as $J \ideal T$ is a graded
  ideal, as remarked before, we can ensure that $x$ is {\it
    $T$-homogeneous}. But then $x z = 0$ implies $x z_{v_{\ell}} = 0$
  by Lemma~\ref{lem:cancellative'}~\ref{item:2b}, a
  contradiction. Thus we {\it must\/} have $\ell =1$, finishing the
  induction.
\end{proof}

\begin{corollary}
  Suppose that $R$ is an $\bN^{k}$-graded ring. Let $A$ be a right
  ideal in~$R$ such that $A^{\rr}$, its right annihilator, is
  non-zero. Then $A^{\rr}$ contains a non-trivial graded ideal
  of~$R$. \qed
\end{corollary}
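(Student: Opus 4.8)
The plan is to deduce the statement directly from Theorem~\ref{thm:Nk}, combined with the elementary observation that the two-sided ideal generated by a single homogeneous element is automatically graded. Since $A^{\rr}$ is already known to be a two-sided ideal of~$R$ (as recorded in the notation section), once Theorem~\ref{thm:Nk} supplies a homogeneous element inside~$A^{\rr}$, the ideal it generates will be a graded ideal trapped inside~$A^{\rr}$, and this is exactly what is wanted.

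Concretely, I would first apply Theorem~\ref{thm:Nk} to obtain a non-zero homogeneous element $h \in A^{\rr}$, say of degree $d \in \bN^{k}$. Next, because $A^{\rr}$ is a two-sided ideal, the entire two-sided ideal $I = \langle h \rangle$ generated by~$h$ is contained in~$A^{\rr}$. Finally, since $h \neq 0$ and $h \in I$, the ideal $I$ is non-trivial, so the only remaining task is to confirm that $I$ is graded.

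The sole step that requires an argument—hence the main (and only genuine) obstacle—is verifying that $I$ is a graded ideal of~$R$. I would write $I = \bZ h + Rh + hR + RhR$, the description valid even in the non-unital setting admitted here, and then decompose each generating product into homogeneous components: for $r = \sum_{g} r_{g}$ one has $rh = \sum_{g} r_{g} h$ with $r_{g} h \in R_{g+d}$, and similarly for the products $hr$ and $rhr'$. Each such homogeneous component again lies in~$I$ (for instance $r_{g} h \in Rh \subseteq I$), so every element of~$I$ has all of its homogeneous components in~$I$; this is precisely the criterion for $I$ to be graded. I expect no real difficulty beyond tracking the non-unital case carefully, since the homogeneity of~$h$ propagates through each of the products defining~$I$.
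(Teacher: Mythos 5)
Your proposal is correct and matches the argument the paper intends (the corollary is stated with an immediate \(\qedsymbol\), the point being exactly that Theorem~\ref{thm:Nk} supplies a non-zero homogeneous \(h \in A^{\rr}\), and the two-sided ideal generated by a homogeneous element is graded and stays inside the two-sided ideal \(A^{\rr}\)). Your careful treatment of the non-unital case via \(I = \bZ h + Rh + hR + RhR\) is a welcome extra detail, not a deviation.
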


\begin{corollary}
\label{cor:weak_McCoy_elements}
  Let $R$ be an $\bN^{k}$-graded ring, and let $(f_{i})_{i \in I}$ be
  a family of elements of~$R$. Suppose there exists a non-zero element
  $g \in R$ such that $f_{i} g = 0$ for all~$i$. Suppose further that
  $R$ is semi-commutative so that $ab=0$ implies $arb=0$ for all
  $r \in R$.  Then there exists a non-zero homogeneous element
  $h \in R_{v}$, of possibly non-zero degree~$v \in \bN^{k}$, such
  that $f_{i} h = 0$ for all $i \in I$.
\end{corollary}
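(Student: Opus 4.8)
The plan is to reduce the statement to Theorem~\ref{thm:Nk} by a suitable choice of right ideal. First I would let $A$ denote the right ideal of~$R$ generated by the family $(f_{i})_{i \in I}$. By construction each~$f_{i}$ lies in~$A$, so that \emph{any} element $h \in A^{\rr}$ automatically satisfies $f_{i} h = 0$ for all~$i$. The whole task therefore reduces to producing a non-zero homogeneous element in~$A^{\rr}$, which is exactly what Theorem~\ref{thm:Nk} delivers once we know that $A^{\rr}$ is non-trivial.

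The crucial step is to verify that $g \in A^{\rr}$. A general element of~$A$ has the form $a = \sum_{i} (n_{i} f_{i} + f_{i} r_{i})$, a finite sum with $n_{i} \in \bZ$ and $r_{i} \in R$ (the integer multiples $n_{i} f_{i}$ being superfluous when $R$ is unital). Computing
\begin{displaymath}
  ag = \sum_{i} \bigl( n_{i} f_{i} g + f_{i} r_{i} g \bigr) \ ,
\end{displaymath}
the hypothesis $f_{i} g = 0$ annihilates the terms $n_{i} f_{i} g$ directly, while semi-commutativity upgrades $f_{i} g = 0$ to $f_{i} r_{i} g = 0$, killing the remaining terms. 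Hence $ag = 0$ for every $a \in A$, that is, $g \in A^{\rr}$; since $g \neq 0$ by hypothesis, the right annihilator~$A^{\rr}$ is non-trivial.

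Finally I would apply Theorem~\ref{thm:Nk} to the $\bN^{k}$-graded ring~$R$ and the right ideal~$A$: it furnishes a non-zero homogeneous element $h \in A^{\rr} \cap R_{v}$ for some $v \in \bN^{k}$. As noted above, $f_{i} \in A$ forces $f_{i} h = 0$ for all~$i$, which is the desired conclusion.

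The main obstacle is precisely the passage from ``$g$ annihilates each generator~$f_{i}$'' to ``$g$ annihilates all of~$A$''. This is where semi-commutativity is indispensable: without it, $f_{i} g = 0$ gives no control over products of the form $f_{i} r g$, and $g$ need not annihilate the right ideal generated by the~$f_{i}$. Everything else is formal bookkeeping around an application of Theorem~\ref{thm:Nk}.
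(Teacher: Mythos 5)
Your proposal is correct and follows exactly the paper's own argument: form the right ideal $A$ generated by the $f_{i}$, use semi-commutativity to show $g \in A^{\rr}$, and apply Theorem~\ref{thm:Nk}. The only difference is that you spell out the verification that $g$ annihilates a general element of~$A$ (including the integer-multiple terms needed in the non-unital case), which the paper leaves implicit.
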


\begin{proof}
  Let $A = \langle f_{i} \, | \, i \in I \rangle$ be the right ideal
  generated by the elements~$f_{i}$. As $R$ is semi-commutative, $g$
  is a non-trivial annihilator of~$A$. Hence by Theorem~\ref{thm:Nk}
  there exists a homogeneous non-zero element $h \in A^{\rr}$; this
  element~$h$ annihilates in particular the specified generators~$f_i$
  of~$A$.
\end{proof}

\section{Positive subrings of $\bZ^{k}$-graded rings}

Let $S = \bigoplus_{v \in \bZ^{k}} S_{v}$ now denote a
$\bZ^{k}$-graded ring. We will consider the following condition, for
various elements $v \in \bZ^{k}$:
\begin{equation}
  \label{eq:2'}
  \text{For \(x \in S_{v}\), if \(x \neq 0\) then \(x S_{-v} \neq
    \{0\} \).} 
\end{equation}
This is equivalent to saying that the left annihilator of~$S_{-v}$ has
trivial intersection with~$S_{v}$.

The ring $S$ admits an $S_0$-valued ``inner product''
$\langle a,b\rangle = (ab)_{0}$, 
the degree\nbd-$0$ component of the product~$ab$. It is called {\it
  right non-degenerate\/} if $\langle a, S \rangle = \{0\}$ implies
$a=0$, for all $a \in S$. Rings with non-degenerate inner product were
investigated by \textsc{Cohen} and \textsc{Rowen} \cite{MR696990}.

\begin{lemma}
  \label{lem:good_rings}
  \begin{enumerate}
  \item The inner product $\langle \,\cdot\, , \,\cdot\, \rangle$
    on~$S$ is right non-degenerate if and only if
    condition~\eqref{eq:2'} holds for every $v \in \bZ^{n}$.
  \item If $S$ is a strongly graded unital ring, the inner product
    $\langle \,\cdot\, , \,\cdot\, \rangle$ on~$S$ is right
    non-degenerate.
  \end{enumerate}
\end{lemma}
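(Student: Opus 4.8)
The plan is to prove part~(1) by translating right non-degeneracy into a statement about individual homogeneous components, and then to deduce part~(2) from part~(1) by exploiting the defining property of strong gradings.

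For part~(1), I would first record how the inner product behaves when the second argument is homogeneous. Writing $a = \sum_{v} a_{v}$ with $a_{v} \in S_{v}$, and taking $b \in S_{w}$ homogeneous of degree~$w$, the product $ab = \sum_{v} a_{v} b$ has degree-$0$ component $a_{-w} b$, since $a_{v} b \in S_{v+w}$ lands in degree~$0$ exactly when $v = -w$. Hence $\langle a, b \rangle = a_{-w} b$ for $b \in S_{w}$. Because the inner product is additive in its second variable, $\langle a, S \rangle = \{0\}$ is equivalent to $\langle a, b \rangle = 0$ for all \emph{homogeneous}~$b$, that is, to $a_{-w} S_{w} = \{0\}$ for every $w \in \bZ^{k}$; replacing $w$ by $-v$ this reads $a_{v} S_{-v} = \{0\}$ for all $v \in \bZ^{k}$.

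The key observation is that both the hypothesis $\langle a, S \rangle = \{0\}$ and the conclusion $a = 0$ decouple across the grading: the former is equivalent to the family of conditions $a_{v} S_{-v} = \{0\}$, one for each~$v$, and the latter to $a_{v} = 0$ for all~$v$. Consequently right non-degeneracy holds if and only if the single-degree implication ``$x \in S_{v}$ with $x S_{-v} = \{0\}$ forces $x = 0$'' holds for every~$v$: given non-degeneracy one tests it on the element $a$ concentrated in degree~$v$ to obtain this implication, and conversely, given the implication in every degree, one applies it to each component $a_{v}$ of a general~$a$ to conclude $a = 0$. The single-degree implication is exactly the contrapositive of condition~\eqref{eq:2'}, which proves~(1).

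For part~(2), by part~(1) it suffices to verify~\eqref{eq:2'} for each~$v$, and I would do so via its contrapositive. Since $\bZ^{k}$ is a group, hence cancellative, the unit of~$S$ is homogeneous of degree~$0$, so $1 \in S_{0}$; and as $S$ is strongly graded we have $S_{-v} S_{v} = S_{0}$, whence $1 = \sum_{i} b_{i} a_{i}$ for suitable $b_{i} \in S_{-v}$ and $a_{i} \in S_{v}$. If now $x \in S_{v}$ satisfies $x S_{-v} = \{0\}$, then in particular $x b_{i} = 0$ for every~$i$, and therefore $x = x \cdot 1 = \sum_{i} (x b_{i}) a_{i} = 0$. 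This establishes~\eqref{eq:2'} and hence the non-degeneracy asserted in~(2). The only genuinely delicate point is the decoupling step in part~(1): one must check that right non-degeneracy, a single condition on each full element~$a$, is genuinely equivalent to the per-degree condition~\eqref{eq:2'} and not merely implied by it. This succeeds because the annihilation conditions $a_{v} S_{-v} = \{0\}$ involve only the single component $a_{v}$ and the fixed block $S_{-v}$, so they impose no cross-degree constraints; the remaining computations are routine.
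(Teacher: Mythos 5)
Your proof is correct and follows essentially the same route as the paper: part~(1) via the identity $\langle a, b\rangle = a_{-w}b$ for homogeneous $b \in S_{w}$, which reduces non-degeneracy to the per-degree condition~\eqref{eq:2'}, and part~(2) via a decomposition $1 = \sum_{i} b_{i} a_{i}$ with $b_{i} \in S_{-v}$, $a_{i} \in S_{v}$ supplied by the strong grading. The only cosmetic difference is that you deduce~(2) formally from~(1) by verifying~\eqref{eq:2'}, whereas the paper runs the same computation directly on a general element $a$ with $\langle a, S\rangle = \{0\}$.
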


\begin{proof}
  We prove~(1) first. Suppose that
  $\langle \,\cdot\, , \,\cdot\, \rangle$ is right non-degenerate, and
  let $x \in S_{v}$ be given. Then
  $x S_{-v} = \langle x, S \rangle = \{0\}$ implies $x=0$, which means
  that condition~\eqref{eq:2'} holds. Conversely,
  suppose~\eqref{eq:2'} holds for all~$v$, and let
  $a = \sum_{w} a_{w} \in S$ (with $a_{w} \in S_{w}$) be such that
  $\langle a, S \rangle = \{0\}$. Then, for any $w \in \bZ^{k}$,
  \begin{displaymath}
    a_{w} S_{-w} = \langle a, S_{-w} \rangle \subseteq \langle a, S
    \rangle = \{0\}
  \end{displaymath}
  so that $a_{w} = 0$ and thus $a = \sum_{w} a_{w} = 0$ as well. This
  shows that $\langle \,\cdot\, , \,\cdot\, \rangle$ is right
  non-degenerate. 

  To prove~(2), let $a = \sum_{w} a_{w} \in S$ (with
  $a_{w} \in S_{w}$) be such that $\langle a, S \rangle = \{0\}$. For
  any $v \in \bZ^{k}$ we have $S_{-v} S_{v} = S_{0}$, by definition of
  strong grading, hence we can find finitely many elements
  $y_{j} \in S_{-v}$ and $z_{j} \in S_{v}$ such that
  $1 = \sum_{j} y_{j} z_{j}$. Now
  $a_{v} y_{j} = \langle a, y_{j} \rangle \in \langle a, S \rangle =
  \{0\}$ so that $a_{v} y_{j} = 0$ for all~$j$. It follows that
  \begin{displaymath}
    a_{v} = a_{v} \cdot 1 = \sum_{j} (a_{v} y_{j}) z_{j} = 0
  \end{displaymath}
  and hence that $a = \sum_{v} a_{v} = 0$.
\end{proof}

\begin{example}[{\textsc{Cohen}-\textsc{Rowen} \cite{MR696990}, Example~3}]
  Let $K$ be a (possibly non-unital) ring with trivial left annihilator
  so that for all $x \neq 0$ there exists $y \in K$ with $xy \neq 0$,
  and let $S$ denote the ring of square matrices of size~$n$ with
  entries in~$K$. We equip $S$ with a $\bZ$-grading by setting
  \begin{displaymath}
    S_{t} = \bigoplus_{j} K \cdot e_{j, j+t} \ ,
  \end{displaymath}
  where $e_{i,j}$ is a formal matrix unit. In particular, $S_{0}$
  corresponds to the main diagonal and $S_{1}$~to the first
  superdiagonal. Given a non-zero element
  $x = \sum_{j} \lambda_{j} e_{j,j+v} \in S_{v}$ there is an index~$p$
  such that $\lambda_{p} \neq 0$. By hypothesis on~$K$ we can choose
  $\mu \in K$ with $\lambda_{p} \mu \neq 0$. Then
  $y = \mu e_{j+v,j} \in S_{-v}$ satisfies
  $xy = \lambda_{p} \mu e_{j,j} \neq 0$. This shows that $S$
  satisfies~\eqref{eq:2'} for all $v \in \bZ$. As $S_{v} = \{0\}$ for
  $|v| \geq n$, the ring~$S$ is not strongly graded.
\end{example}

As a matter of terminology, $S_{+} = \bigoplus_{v \in \bN^{k}} S_{v}$
is the {\it positive subring\/} of~$S$.

\begin{theorem}
  \label{thm:half-strong}
  Suppose that the $\mathbb{N}^{k}$-graded ring $R= S_{+}$ is the
  positive subring of a $\bZ^{k}$-graded ring~$S$, and suppose that
  $S$ satisfies condition~\eqref{eq:2'} for all
  $v \in \bN^{k} \setminus \{0\}$.  Let $A$ be a right ideal in~$R$
  such that $A^{\rr}$, its right annihilator in~$R$, is
  non-trivial. Then $A^{\rr} \cap R_{0} \neq \{0\}$, \ie, $A^{\rr}$
  contains a non-zero homogeneous element~$h$ of degree~$0$.
\end{theorem}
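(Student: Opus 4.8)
The plan is to start from the weak graded \textsc{McCoy} property already established for~$R$ and then use condition~\eqref{eq:2'} to lower the degree of a homogeneous annihilator all the way to~$0$ in a single step. Concretely, I would first apply Theorem~\ref{thm:Nk} to the $\bN^{k}$-graded ring~$R$ and the right ideal~$A$: since $A^{\rr}$ is non-trivial, it contains a non-zero element~$h$ that is homogeneous of some degree $v \in \bN^{k}$, so $h \in R_{v} = S_{v}$ and $ah = 0$ for all $a \in A$. If $v = 0$ there is nothing left to prove, so I would assume $v \in \bN^{k} \setminus \{0\}$.

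For such~$v$ the hypothesis guarantees that condition~\eqref{eq:2'} holds. As $h \in S_{v}$ is non-zero, there is therefore an element $y \in S_{-v}$ with $hy \neq 0$. The product $hy$ lies in $S_{v} S_{-v} \subseteq S_{0} = R_{0}$, so it is a non-zero homogeneous element of~$R$ of degree~$0$. It remains to check that $hy$ annihilates~$A$: for every $a \in A$ we have $a(hy) = (ah)y = 0 \cdot y = 0$, using that $h \in A^{\rr}$. Hence $hy \in A^{\rr} \cap R_{0}$ is the desired non-zero element of degree~$0$.

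The one point that requires care --- and the only place where the argument could go wrong --- is the status of the auxiliary element~$y$. Since $v \neq 0$, a non-zero element $y \in S_{-v}$ does \emph{not} belong to the subring~$R$, so a priori right multiplication by~$y$ need not respect~$R$ or its annihilators. What rescues the argument is that we multiply the \emph{already-annihilated} product~$ah$, which is zero: the associativity identity $a(hy) = (ah)y$ is computed in~$S$, but its right-hand side vanishes for the trivial reason that $ah = 0$, while its left-hand side $a(hy)$ is a product of two elements of~$R$ (as $hy \in R_{0}$). Thus no element outside~$R$ ever appears in a way that affects the conclusion, and $hy$ is a genuine member of the annihilator $A^{\rr}$ taken inside~$R$. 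This is also why a single multiplication by a degree\nbd-$(-v)$ element suffices and no iteration (as in Theorem~\ref{thm:amplitude}) is needed: the product $S_{v} S_{-v}$ lands directly in degree~$0$.
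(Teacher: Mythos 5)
Your argument is correct and is essentially identical to the paper's own proof: apply Theorem~\ref{thm:Nk} to obtain a non-zero homogeneous $h \in A^{\rr} \cap R_{v}$, and if $v \neq 0$ use condition~\eqref{eq:2'} to pick $y \in S_{-v}$ with $hy \neq 0$, landing in $R_{0}$. Your closing remark justifying why $hy \in A^{\rr}$ via $a(hy) = (ah)y = 0$ is in fact slightly more careful than the paper's one-line appeal to $A^{\rr}$ being an ideal, since $y$ itself lies outside~$R$.
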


\begin{proof}
  By Theorem~\ref{thm:Nk} there exist $v \in \bN^{k}$ and a non-zero
  element $g \in R_{v} = S_{v}$ such that $g \in A^{\rr}$. If $v = 0$
  we are done. Otherwise, the hypothesis on~$S$ guarantees that there
  exists an element $y \in S_{-v}$ with $gy \neq 0$.  Set $h = g y$;
  by construction $h \in S_{0} = R_{0}$, and $h \in A^{\rr}$ as
  $A^{\rr}$ is a (right) ideal.
\end{proof}

\begin{corollary}
  In the situation of Theorem~\ref{thm:half-strong}, the
  ideal~$A^{\rr}$ contains a graded ideal of~$R$ intersecting $R_{0}$
  non-trivially. \qed
\end{corollary}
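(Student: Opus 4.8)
The plan is to apply Theorem~\ref{thm:half-strong} directly and then pass to the two-sided ideal generated by the element it produces. First I would invoke Theorem~\ref{thm:half-strong} to obtain a non-zero element $h \in A^{\rr} \cap R_{0}$, homogeneous of degree exactly~$0$; this is the only substantive input, and it is already in hand by hypothesis. I would then set $I = \langle h \rangle$, the two-sided ideal of~$R$ generated by~$h$, and claim that $I$ is the required graded ideal meeting~$R_{0}$.

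Three points then have to be verified, all of them routine. First, $I \subseteq A^{\rr}$: since $A^{\rr}$ is a two-sided ideal (as noted at the outset) and contains~$h$, it contains the whole ideal generated by~$h$. Second, $I$ is graded: because $h$ is homogeneous, each generator is homogeneous — for a possibly non-unital~$R$ the generated ideal is $\bZ h + Rh + hR + RhR$, and every element of $\bZ h$, $R_{w}h$, $hR_{w}$, $R_{w}hR_{w'}$ is homogeneous (of degree $w + \deg h + w'$, etc.). Hence $I$ is spanned by homogeneous elements and therefore satisfies $I = \bigoplus_{v} (I \cap R_{v})$, \ie, it is a graded ideal. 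Third, $I$ meets $R_{0}$ non-trivially: by construction $h \in I$, while $h \in R_{0}$ and $h \neq 0$, so $h \in (I \cap R_{0}) \setminus \{0\}$.

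I expect no genuine obstacle here: all the difficulty has been absorbed into Theorem~\ref{thm:half-strong}, whose conclusion hands us an annihilator that is homogeneous of degree~$0$ rather than of merely unspecified degree. The only points meriting a moment's care are the standard fact that a homogeneous element generates a graded ideal, and, in the non-unital setting, the observation that one must retain $h$ itself (via the summand $\bZ h$) among the generators so that $h \in I$ and the intersection with~$R_{0}$ is genuinely non-empty. It is precisely this degree\nbd-$0$ feature of~$h$, as opposed to bare homogeneity, that upgrades the earlier corollaries' conclusion — a non-trivial graded ideal of unspecified degree — to a graded ideal that intersects~$R_{0}$.
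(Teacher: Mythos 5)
Your proposal is correct and follows exactly the argument the paper leaves implicit behind its \qed: take the two-sided ideal generated by the degree\nbd-$0$ element $h$ supplied by Theorem~\ref{thm:half-strong}, note that it is graded because $h$ is homogeneous, that it lies in $A^{\rr}$ because $A^{\rr}$ is a two-sided ideal, and that it meets $R_{0}$ because it contains $h$ itself. Your extra care with the non-unital case (retaining the $\bZ h$ summand among the generators) is a sensible refinement but does not change the substance.
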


Theorem~\ref{thm:half-strong} applies to the $\bN^{k}$-graded ring
$R = K[X_{1},\, X_{2},\, \cdots,\, X_{k}]$ of polynomials with
coefficients in a ring~$K$ with trivial left annihilator (that is,
$xK=\{0\}$ implies $x=0$). Indeed, we have $R = S_{+}$ where $S$ is
the \textsc{Laurent} polynomial ring
\begin{displaymath}
  S = K[X_{1},\, X_{1}^{-1},\, X_{2},\, X_{2}^{-1},\, \cdots,\,
  X_{k},\, X_{k}^{-1}]
\end{displaymath}
equipped with the usual $\bZ^{k}$-grading, giving the
indeterminate~$X_j$ degree~$e_j$, the $j$th unit vector. For
unital~$K$ we recover the classical result of \textsc{McCoy}
\cite[Theorem]{MR0082486}.

\medbreak

To go any further, we need to put stronger conditions on our rings:

\begin{theorem}
  \label{thm:ann_strongly_graded}
  Let $S$ be a strongly $\bZ^{k}$-graded unital ring, and let
  $R = S_{+}$ be its positive subring. Let
  $f_{1},\, f_{2},\, \cdots,\, f_{\ell} \in S$, and suppose that there
  exists a non-zero element $g \in S$ such that
  \begin{equation}
    f_{j} r g = 0 \quad \text{for all~\(j\) and all \(r \in
      R\).} \label{eq:3}
  \end{equation}
  Then there exists a non-zero element $h \in R_{0}$, homogeneous of
  degree~$0$, such that $f_{j}rh = 0$ for all~$j$ and all $r \in R$.
\end{theorem}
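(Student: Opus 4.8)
The plan is to reduce the statement to Theorem~\ref{thm:half-strong} by manufacturing, \emph{inside}~$R$, a right ideal whose right annihilator in~$R$ is non-trivial. Two things stand in the way: neither the~$f_j$ nor the annihilating element~$g$ need lie in~$R$. Both obstructions are cleared by multiplying with homogeneous slabs of~$S$ of sufficiently large degree, and the strong grading is what lets us undo these multiplications at the end.

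First I would choose $M \in \bN^k$ large enough (coordinate-wise) that $M + \supp(f_j) \subseteq \bN^k$ for every~$j$; this is possible as there are finitely many~$f_j$, each of finite support. Then $a f_j \in S_{+} = R$ for every $a \in S_{M}$ and every~$j$. Let $A$ be the right ideal of~$R$ generated by the family $\{\, a f_j \mid a \in S_{M},\ 1 \le j \le \ell \,\}$. Using~\eqref{eq:3} together with associativity, $(a f_j) r g = a (f_j r g) = 0$ for all $a \in S_{M}$ and $r \in R$, so that $A g = \{0\}$.

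The element~$g$ lies in~$S$ but possibly not in~$R$, so the next step pushes it into~$R$ while keeping it non-zero. The key auxiliary fact is: \emph{if $S$ is strongly $\bZ^{k}$-graded and unital and $p \in S_{w}$ is non-zero, then $p S_{N} \neq \{0\}$ for every $N \in \bZ^{k}$.} Indeed, by Lemma~\ref{lem:good_rings} condition~\eqref{eq:2'} holds for all degrees, giving $p S_{-w} \neq \{0\}$; were $p S_{N} = \{0\}$, then strong grading would force $p S_{-w} = p (S_{N} S_{-w-N}) = (p S_{N}) S_{-w-N} = \{0\}$, a contradiction. Picking any $w_{0} \in \supp(g)$ and any $N \in \bN^{k}$ large enough that $N + \supp(g) \subseteq \bN^{k}$, this fact applied to $p = g_{w_{0}}$ yields $b \in S_{N}$ with $g_{w_{0}} b \neq 0$; as the degree-$(w_{0}+N)$ component of $gb$ equals $g_{w_{0}} b$, we obtain $0 \neq gb \in R$ together with $A(gb) = (Ag)b = \{0\}$. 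Thus $A$ has non-trivial right annihilator in~$R$, and Theorem~\ref{thm:half-strong} — applicable because the strongly graded unital ring~$S$ satisfies~\eqref{eq:2'} for all~$v$ by Lemma~\ref{lem:good_rings}(2) — produces a non-zero $h \in A^{\rr} \cap R_{0}$.

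It remains to remove the slab~$S_{M}$. Since $a f_j \in A$, for every $a \in S_{M}$, $r \in R$ and~$j$ we have $(a f_j) r \in A$, whence $a (f_j r h) = (a f_j) r h = 0$; that is, $S_{M} (f_j r h) = \{0\}$. Finally, strong grading gives $1 \in S_{0} = S_{-M} S_{M}$, so writing $1 = \sum_{i} y_{i} z_{i}$ with $y_{i} \in S_{-M}$ and $z_{i} \in S_{M}$ yields $f_j r h = \sum_{i} y_{i}\, z_{i}(f_j r h) = 0$, since each $z_{i}(f_j r h) \in S_{M}(f_j r h) = \{0\}$. Hence $f_j r h = 0$ for all~$j$ and all $r \in R$, with $h \in R_{0}$ non-zero, as required. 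I expect the main obstacle to be the degree bookkeeping behind the two non-annihilation facts used above — that $p S_{N} \neq \{0\}$ for every~$N$, and that $S_{M} x = \{0\}$ forces $x = 0$ — both of which rest on combining condition~\eqref{eq:2'} with the equalities $S_{u} S_{u'} = S_{u+u'}$ supplied by the strong grading.
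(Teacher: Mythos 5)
Your proof is correct and follows essentially the same route as the paper: left-multiply the $f_j$ by a high-degree homogeneous slab to push them into $R$, right-multiply $g$ to get a non-zero annihilator inside $R$, invoke Theorem~\ref{thm:half-strong} to obtain $h \in R_0$, and then undo the left multiplication via a decomposition $1 = \sum_i y_i z_i \in S_{-M}S_{M}$ supplied by the strong grading. The only (harmless) variation is in how you keep $g$ non-zero after pushing it into $R$ --- you go through Lemma~\ref{lem:good_rings} and a single homogeneous component $g_{w_0}$, whereas the paper simply writes $g = \sum_i (g x_i) y_i \neq 0$ and picks an index $p$ with $g x_p \neq 0$.
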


\begin{proof}
  Without loss of generality we may assume that $g \in R$. Indeed, we
  may choose a vector $w \in \bN^{k}$, with sufficiently large
  positive entries, such that
  $w + \supp(g) = \{ w + x \,|\, x \in \supp(g) \}\subset \bN^{k}$.
  As $S$ is strongly graded there are finitely many elements
  $x_{i} \in S_{w}$ and $y_{i} \in S_{-w}$ such that
  $\sum_{i} x_{i} y_{i} = 1$. As
  $g = g \cdot 1 = \sum_{i} (g x_{i}) \cdot y_{i}$ is non-zero there
  is an index~$p$ with $gx_{p} \neq 0$. By construction
  $g x_{p} \in R$ and $f_{j}r (g x_{p}) = (f_{j} r g) x_{p} = 0$ for
  all~$j$, so we can replace $g$ with~$gx_{p} \in R$ in~\eqref{eq:3}.

  Similarly, we may choose a $v \in \bN^{k}$ such that
  $v + \supp(f_{j}) \subset \bN^{k}$ for all~$j$.  As $S$ is strongly
  graded there are finitely many elements $x_{i} \in S_{-v}$ and
  $y_{i} \in S_{v}$ such that $\sum_{i} x_{i} y_{i} = 1$. Then
  $y_{i} f_{j} \in R$ by choice of~$v$, and for all indices $i$
  and~$j$, and all $r \in R$, we have
  \begin{displaymath}
    (y_{i} f_{j}) r g = y_{i} (f_{j} r g) = y_{i} \cdot 0 = 0
  \end{displaymath}
  by our hypotheses on~$g$ and the~$f_{j}$. This means that the right
  ideal $A = \big\langle \{ y_{i} f_{j}\} \big\rangle$ of~$R$
  generated by the elements $y_{i} f_{j}$ has a non-trivial right
  annihilator in~$R$, \viz, the element~$g$. By
  Theorem~\ref{thm:half-strong} there exists a non-zero element
  $h \in R_{0}$ annihilating $A$ from the right. In particular,
  $y_{i} f_{j} r h = 0$ for all indices $i$ and~$j$, and all
  $r \in R$. But then we also have the equality
  \begin{displaymath}
    f_{j} r h = \sum_{i} x_{i} (y_{i} f_{j} r h) = 0
  \end{displaymath}
  for all~$j$ and all $r \in R$, proving the Theorem.
\end{proof}

\begin{corollary}
  \label{cor:McCoy-again}
  Let $S$ be a strongly $\bZ^{k}$-graded unital ring. Suppose that $S$
  is semi-commutative so that $ab=0$ implies $asb=0$ for all
  $s \in S$.  Let $f_{1},\, f_{2},\, \cdots,\, f_{\ell} \in S$, and
  suppose that there exists a non-zero element $g \in S$ such that
  $f_{j} g = 0$ for all~$j$. Then there exists a non-zero element
  $h \in S_{0}$, homogeneous of degree~$0$, such that $f_{j}h = 0$ for
  all~$j$.
\end{corollary}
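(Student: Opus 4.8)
The plan is to deduce this Corollary directly from Theorem~\ref{thm:ann_strongly_graded}, using semi-commutativity as the device that upgrades the plain factorisation $f_{j}g = 0$ into the stronger ``sandwiched'' vanishing condition~\eqref{eq:3}. First I would record that, since $S$ is a $\bZ^{k}$-graded unital ring and $\bZ^{k}$ is a group (in particular cancellative), the discussion following Lemma~\ref{lem:cancellative'} --- the observation of \textsc{Dade} --- guarantees that the unit $1 \in S$ is homogeneous of degree~$0$. Hence $1 \in S_{0} = R_{0} \subseteq R = S_{+}$. This membership is exactly what will let me pass from the conclusion of Theorem~\ref{thm:ann_strongly_graded} back to the statement I want.

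Next I would exploit semi-commutativity. The hypothesis gives $f_{j}g = 0$ for each~$j$; applying the property $ab=0 \Rightarrow asb=0$ with $a=f_{j}$, $b=g$, and $s$ ranging over all of~$S$ yields $f_{j}sg = 0$ for every $s \in S$, and in particular $f_{j}rg = 0$ for every $r \in R$. This is precisely condition~\eqref{eq:3}, so the hypotheses of Theorem~\ref{thm:ann_strongly_graded} are satisfied with the same elements $f_{1},\, f_{2},\, \cdots,\, f_{\ell}$ and the same non-zero~$g$.

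I would then invoke Theorem~\ref{thm:ann_strongly_graded} to obtain a non-zero element $h \in R_{0} = S_{0}$, homogeneous of degree~$0$, satisfying $f_{j}rh = 0$ for all~$j$ and all $r \in R$. Specialising to $r = 1 \in R$, which is legitimate by the first step, collapses this to $f_{j}h = 0$ for all~$j$, exactly the asserted conclusion, with $h$ of degree~$0$ as required.

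As for the main obstacle: there is essentially none of a technical nature, since the heavy lifting resides in Theorem~\ref{thm:ann_strongly_graded}. The two points deserving care are conceptual rather than computational, namely recognising that semi-commutativity is precisely what promotes $f_{j}g = 0$ to the $R$-sandwiched hypothesis~\eqref{eq:3}, and remembering that $1 \in R$ so that the specialisation $r=1$ both makes sense and recovers the desired un-sandwiched identity. I would expect the resulting write-up to run to only a few lines.
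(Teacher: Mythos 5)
Your proposal is correct and follows essentially the same route as the paper: semi-commutativity promotes $f_{j}g=0$ to condition~\eqref{eq:3}, and Theorem~\ref{thm:ann_strongly_graded} does the rest. Your extra care in checking that $1$ is homogeneous of degree~$0$ (so that $1\in R$ and the specialisation $r=1$ is legitimate) is a detail the paper leaves implicit, but it is not a different argument.
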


\begin{proof}
  By definition of semi-commutativity the condition $f_{j} g = 0$
  implies $f_{j} s g = 0$ for all $s \in S$. In particular,
  condition~\eqref{eq:3} is satisfied, hence
  Theorem~\ref{thm:ann_strongly_graded} applies.
\end{proof}

\raggedright


\end{document}